\theoremstyle{plain}
\newtheorem{theorem}{Theorem}[section]
\theoremstyle{definition}
\newtheorem{definition}[theorem]{Definition}
\theoremstyle{remark}
\newcommand{\f}{\operatorname}
\begin{document}

\articletype{ARTICLE TEMPLATE}

\title{On the unification of zero-adjusted cure survival models}

\author{
\name{Francisco Louzada\textsuperscript{a}, Pedro L. Ramos\textsuperscript{b}, Hayala C. C. de Souza\textsuperscript{c}\thanks{CONTACT Pedro Ramos N. Email: pedro.ramos@mat.uc.cl}, Lawal Oyeneyin\textsuperscript{c} and , Gleici da S. C. Perdon\'a\textsuperscript{d}}
\affil{
\textsuperscript{a}Institute of Mathematical and Computer Sciences,  S\~ao Paulo University, S\~ao Carlos, Brazil; \\ \textsuperscript{b}Pontificia Universidad Católica de Chile, Santiago, Chile;\\
\textsuperscript{c} Department of Obstetrics and Gynaecology, Mother and Child Hospital, Ondo State, Nigeria\\
\textsuperscript{d} Department of Social Medicine - Ribeir\~ao Preto Medical School, University of S\~ao Paulo, Ribeir\~ao Preto, Brazil 
}
}

\maketitle

\begin{abstract}

This paper proposes a unified version of survival models that accounts for both zero-adjustment and cure proportions in various latent competing causes, useful in data where survival times may be zero or cure proportions are present. These models are particularly relevant in scenarios like childbirth duration in sub-Saharan Africa. Different competing cause distributions were considered, including Binomial, Geometric, Poisson, and Negative Binomial. The model's maximum likelihood point estimators and asymptotic confidence intervals were evaluated through simulation, demonstrating improved accuracy with larger sample sizes. The model best fits real obstetric data when assuming geometrically distributed causes. This flexible model, capable of considering different distributions for the lifetime of susceptible individuals and competing causes, is an effective tool for adjusting survival data, indicating broad application potential.
\end{abstract}

\begin{keywords}
childbirth;
competing causes; cure proportion; unified model; zero-adjustment
\end{keywords}

\section{Introduction}
\textcolor{black}{Survival models that incorporate a long-term (or cure) proportion, capturing the subset of individuals who do not experience the event of interest even after extended follow-up, have become standard practice. There are several examples of applying this class of models in the biomedical field \cite{perdona2011general,martinez2014bayesian,martinez2013mixture,mazucheli2013exponentiated, cobre2013mechanistic}. 
The most popular cure models are the mixture cure model proposed by Berkson and Gage \cite{berkson1952survival} and the promotion time cure rate model introduced by Yakovlev et al. \cite{tsodikov1996stochastic} and generalized by \cite{yin2005general,cooner2007flexible}. Their main motivation was related to the survival of cancer patients or even the development of oncogene cells.  Perhaps, trying to find a single answer, Tsodikov,Ibrahim, and Yakovlev \cite{tsodikov2003estimating}, proposed a unified approach for cure survival models that was later revisited by Cooner, et.al \cite{cooner2007flexible} and by Rodrigues et al. \cite{rodrigues2009unification}. 
Specifically, the models examined the relationship between cell numbers (causes) related to cancer survival, with the number of concurrent causes or cells (N) treated as a random variable. All proposed models assume that the variable time must be greater than zero. }

Although survival models typically assume that lifetimes are always greater than zero, there are numerous phenomena where outcomes can include zero lifetimes, affecting the survival curve by deflating its initial value to values smaller than one.
 For example, in the commercial area, where the interest is to predict defaulters or fraudsters, a proportion of clients do not pay their loans from the beginning of the contract, leading to default times equal to zero \cite{louzada2018zero}. 
In the field of oncology, there is a significant focus on examining the time interval that elapses between a cancer diagnosis and the onset of metastasis,it is possible that patients already present metastasis at the time of diagnosis \cite{marko2016prevalence}. 

 Also, in oncology, when the researcher is interested in evaluating the lifetime after surgical treatment, a proportion of individuals may die during or immediately after surgery, generating lifetimes equal to zero \cite{brenna2004prognostic}. This peculiarity is the so-called zero-adjustment or zero-inflation  \cite{kalamatianou2003perpetual,de2017zero,louzada2018zero}.
Additionally, survival models that accommodate these two characteristics—zero adjustment and cure ratio—play a key role in survival analysis and are referred to as zero-adjusted cure (ZAC) survival models \cite{louzada2018zero, de2017zero, calsavara2019zero, cavenague2022log, de2022bayesian}. Such models can be compared with the distribution of a latent variable, $N$, which represents the count of possible competing causes for the occurrence of the outcome of interest.

The first is the standard ZAC mixture model proposed by Kalamatianou and McClean \cite{kalamatianou2003perpetual} and further formalized by Louzada et al. \cite{louzada2018zero}, which generalizes the mixture cure model proposed by Berkson and Gage \cite{berkson1952survival}. Balakrishnan and Milienos \cite{balakrishnan2020class} provide an in-depth analysis of zero-adjusted cure models, along with further extensions \cite{milienos2022reparameterization, papastamoulis2024bayesian}. In this model, the survival function of the population is given by $S(t) = p_0 + (1 - p_0 - p_1)S_0(t)$, where $p_0$ is the cure proportion, $p_1$ is the zero-adjustment proportion, and $S_0(t)$ is the survival function of the susceptible population. It is possible to show that in the standard mixture ZAC model, the number of competing causes, $N$, follows a Bernoulli distribution. 

The second one is the promotion ZAC model \cite{de2017zero}, which generalizes the promotion cure model \cite{chen1999new}, assuming that $N$ follows a Poisson distribution with mean equal to $\theta$. Then, the population survival function is given by
$$
S(t) = e^{-\theta} + \left( 1 - e^{-\theta} - e^{-\kappa} \right)\left[\frac{e^{-\theta F_0(t)} - e^{-\theta}}{1 - e^{-\theta}} \right].
$$
Therefore, the cure and zero-adjusted proportions are given by $e^{-\theta}$ and $e^{-\kappa}$, respectively. A third model \cite{calsavara2019zero} where the population survival function is given by $S(t) = (1-p)S_0(t)$, which $p$ denotes the proportion of zero-adjusted lifetime, and $S_0(t)$ is an improper survival function from Gompertz or Inverse Gaussian defective models.  In light of what was described, the proposed models are restricted regarding the distribution of $N$. Considering more flexible distributions for $N$ can be useful to describe intricate patterns of data and identify which distribution best fits the data. Therefore, the ZAC models must be more flexible in modeling data with competing risks.

In this paper, to allow the distribution of $N$ to be more flexible than Poisson or Bernoulli distributions, we propose a unified version of the survival models, hereafter the unified ZAC model, which accommodates the zero-adjustment and cure proportion for a general class of latent competing causes. Our modeling is motivated by the peculiarities observed in data from a sub-Saharan African obstetric study. This data set was collected by the World Health Organization (WHO) as part of the development of the project Better Outcomes in Labour Difficulty (BOLD) \cite{oladapo2015better,souza2015development,oladapo2018progression}. 
More details are provided in the application section.

The paper is organized as follows. In Section 2, we derive the unified zero-adjusted cure survival model and present some particular cases as well as, we discuss the inferential procedures under the maximum likelihood approach, assuming that the $N$ follows a Negative Binomial distribution. In Section 3, we present the results of a simulation study to evaluate the finite sample performance of parameter estimates of our model. Section 4 illustrates the proposed methodology on a sub-Saharan African obstetric dataset. The discussion and final remarks are presented in Section 5.

\section{Methods}
\subsection{The unified zero-adjusted cure survival model}\label{sec2}

Following Rodrigues et al. \cite{rodrigues2009unification}, let $Z_i \ (i=1,2,.\ldots)$ be defined as the time-to-event due to the $i$-th competing risk. Also, let $N$ be a random variable with a discrete distribution  $p_n=P(N=n), n=0,1,2,\ldots$, where $Z_i$ are independent of $N$. In order to account the individuals that are not susceptible to the event of interest, the lifetime is defined as $Y=\min(Z_1,Z_2,\cdots, Z_N)$, with $P[Z_0=\infty]=1$, where $Z_0$ is conditionally independent of $N$, leading to a proportion $p_0$ of the population which is not susceptible to the event of interest. Additionally, according to  Feller \cite{feller}, let ${a_n}$ be a real number sequence. 
It is known that the function $A(s)= a_0+a_1 s+a_2 s^2+\cdots$ can be defined as the generating function of the sequence $\{a_n\}$ if, for $s\in[0,1]$,  $A(s)$ converges.

The standard survival models suppose that $Z_i, i=1,\ldots,n$, is a non-negative random variable. Then, to present a model that takes into account zero initial failures, let us define

\begin{equation*}
\begin{aligned}
W_i=
\begin{cases}
0 & \text{
if the observed failure time is equal to 0
}, \\ 
Z_i, & \text{otherwise },
\end{cases}
\end{aligned}
\end{equation*}
as realizations of W. 
To account the individuals that are not susceptible to the event of \textcolor{black}{interest,
the lifetime}, $Y$, is redefined as $Y=\min(W_1,W_2,\cdots, W_N)$, where $P[W_0=\infty]=1$.    

\begin{definition}\label{teor3}  If for $S(y)\in[0,1]$, then
\begin{equation}
A^*(S(y))= a_0+ (a_{1}-b_{1})S(y)+(a_{2}-b_{2})S(y)^2+\cdots= A(S(y))- \sum_{n=1}^{\infty}b_n(S(y)^n),
\end{equation}

\noindent where $A(\cdot)$ and $\sum_{n=1}^{\infty}b_n(S(y)^n)$ converges if $0 \leq S(y) \leq 1$.
\end{definition} 

Based on Definition  \ref{teor3} and on the definitions of Feller\cite{feller} and Rodrigues et al.\cite{rodrigues2009unification}, the survival function of the unified ZAC model, $S_{zp}(y)$, is defined in the following Theorem.

\begin{theorem} \textcolor{black}{Let ${a_n}$and ${b_n}$ be a sequence of real numbers}. Given a proper survival function, S(y), the survival function of the random variable Y is given by
\begin{equation} \label{szp}
\begin{aligned}
S_{zp}(y)&=A^*(S(y))=\sum_{n=0}^{\infty} a_n\left\{ S(y)\right\}^n - \sum_{n=1}^{\infty}b_n\left\{ S(y)\right\}^n\\&=A(S(y))- \sum_{n=1}^{\infty}b_n(S(y)^n).
\end{aligned}
\end{equation}
where $A(.)$ and $\sum_{n=1}^{\infty}b_n(S(y)^n)$ converges if $0 \leq S(y) \leq 1$.
\end{theorem}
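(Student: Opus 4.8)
The plan is to obtain $S_{zp}(y)=P(Y>y)$ by conditioning on the number of competing causes $N$ and exploiting the multiplicative structure of the minimum of independent lifetimes. Writing the law of total probability over the distribution $p_n=a_n$ of $N$, I would begin from
\begin{equation*}
S_{zp}(y) = P(Y>y) = \sum_{n=0}^{\infty} p_n\, P\big(\min(W_1,\ldots,W_n) > y \mid N=n\big),
\end{equation*}
assuming, as in the unified framework of Rodrigues et al.\ \cite{rodrigues2009unification}, that the $W_i$ are independent, identically distributed, and independent of $N$. The degenerate convention $P[W_0=\infty]=1$ disposes of the term $n=0$: conditional on $N=0$ no cause is active, so $P(Y>y\mid N=0)=1$ and this term contributes exactly $a_0$, the cure-rate mass.

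For $n\ge 1$, independence converts the minimum into a product, $P\big(\min(W_1,\ldots,W_n)>y\big)=\big[P(W>y)\big]^n$. The next step is to insert the survival function of $W$ established above: for $y>0$ one has $P(W>y)=(1-c)\,S(y)$, where $S$ is the given proper baseline survival function and $1-c$ is the factor produced by the zero-adjustment. Substituting back collapses the sum into a single generating function,
\begin{equation*}
S_{zp}(y) = \sum_{n=0}^{\infty} a_n\,(1-c)^n\,\{S(y)\}^n = A\big((1-c)\,S(y)\big).
\end{equation*}

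It remains to recast this into the form of Definition \ref{teor3}. Adding and subtracting the full series $A(S(y))=\sum_{n\ge 0}a_n\{S(y)\}^n$, I would read off the correction coefficients
\begin{equation*}
b_n = a_n\big[\,1-(1-c)^n\,\big], \qquad n\ge 1, \quad b_0=0,
\end{equation*}
for which $a_n-b_n=a_n(1-c)^n$, so that $S_{zp}(y)=A(S(y))-\sum_{n\ge 1}b_n\{S(y)\}^n=A^{*}(S(y))$, precisely the asserted identity. The convergence clause then follows at once: since $0\le S(y)\le 1$ and $0\le 1-c\le 1$, the argument $(1-c)S(y)$ lies in $[0,1]$ and $A\big((1-c)S(y)\big)$ converges by the hypothesis on the generating function, while $0\le b_n\le a_n$ forces $\sum_{n\ge 1}b_n\{S(y)\}^n$ to be dominated term by term by the convergent series $A(S(y))$.

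The conditioning and the product rule for the minimum are routine; the delicate part is the bookkeeping surrounding the zero-adjustment. The main obstacle I anticipate is justifying the rearrangement that separates $A\big((1-c)S(y)\big)$ into $A(S(y))$ minus the $b_n$-series, and simultaneously checking that this split reproduces Definition \ref{teor3} rather than some unrelated decomposition --- that is, verifying that the factor $(1-c)^n$ generated by the $n$-fold minimum is exactly what the coefficients $a_n-b_n$ encode. Because every series in play has non-negative terms bounded by the summable sequence $\{a_n\}$, absolute convergence licenses both the rearrangement and the unique identification of the $b_n$, which closes the argument.
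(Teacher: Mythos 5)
Your proof starts from the same place as the paper (condition on $N$, use the minimum structure, reorganize into the form of Definition \ref{teor3}), but it diverges at the single step that carries all of the modeling content, and it lands on a different model. You assume the $W_i$ are mutually independent, so the $n$-fold minimum produces the factor $(1-c)^n$, giving $S_{zp}(y)=A\bigl((1-c)S(y)\bigr)$ and hence $b_n=a_n\bigl[1-(1-c)^n\bigr]$. The paper's proof does something else: it writes
\begin{equation*}
S_{zp}(y)=P[N=0]+P(W_1\geq y,\ldots,W_N\geq y,\,N\geq 1)
=P[N=0]+(1-c)\,P(Z_1>y,\ldots,Z_N>y,\,N\geq 1),
\end{equation*}
pulling out a \emph{single} factor $(1-c)$ irrespective of $n$. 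That is, the zero-adjustment is treated as one individual-level event (with probability $c$ the whole observation is deflated to zero; otherwise every $W_i$ equals its $Z_i$), not as an independent per-cause event. This yields $b_n=c\,a_n$, and that proportionality is exactly what the rest of the paper relies on: the standard mixture ZACR takes $b_m=p_1a_m/(1-p_0)$ and the promotion ZACR takes $b_n=e^{-\tau}a_n/(1-e^{-\theta})$, both of the form $b_n\propto a_n$.

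The two decompositions coincide only when $P(N\leq 1)=1$ (the Bernoulli case), which is why your construction still reproduces the standard mixture ZACR. But for Poisson causes your formula gives $S_{zp}(y)=\exp\{-\theta F(y)-c\,\theta S(y)\}$, which is not the promotion ZACR model (\ref{promzicr}), so your version of the theorem cannot feed the subsequent particular cases. Indeed, the paper explicitly flags and rejects precisely your construction: just after the standard-mixture theorem it observes that substituting a deflated survival function directly into the generating function $A(\cdot)$ yields $p_0+(1-p_1-p_0+p_0p_1)S_p^*(y)$, which differs from (\ref{zcuremodel}), and states that this is why the separate $b_n$-series in (\ref{szp}) is needed at all. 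In fairness, part of the blame lies with the paper: the setup calls the $W_i$ ``realizations of $W$'' without specifying their joint law, and under the i.i.d.\ reading your algebra (including the domination argument $0\leq b_n\leq a_n$ for convergence) is correct, while the paper's single-factor step would be the one needing justification. Still, as a proof of this theorem as the paper intends and uses it --- with the zero-deflation entering as a common factor so that $b_n=c\,a_n$ --- your independence assumption is the wrong idea: it silently replaces the individual-level zero-adjustment by a per-cause one and produces a genuinely different family of survival models.
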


\begin{proof} Firstly, we would like to recall the proof of Theorem 2 presented in Rodrigues et al. which shows that if $Z_k$ denote the time-to-event due to the competing cause $k$ and is greater than 0 then
\begin{equation}\label{proof11}
\begin{aligned}
S_{p}(y)&=P[N=0]+P(Z_1>y,\ldots,Z_N>y,N\geq1)\\
& = a_0 + \sum_{n=1}^{\infty} a_n\left\{ S(y)\right\}^n =A(S(y)).
\end{aligned}
\end{equation}
which is the unified long-term survival model. Note that, here the authors are assuming that $y>0$. Let us suppose that $Z_k$ denote the time-to-event due to the competing cause $k$ and can assume 0 value for at least one $k$.  It is important to point out that, in this case the survival function is defined as $S_p(y)=P[Y\geq y]$ since $P[Y=y]\neq0$ if $p_1\neq0$. Hence, we have that
\begin{equation}\label{proof12}
\begin{aligned}
P(Z_1\geq y,\ldots,Z_N\geq y,N\geq1)=&\ P(Z_1> y,\ldots,Z_N> y,N\geq1)\\&+P\left(\mbox{at least one } Z_i=y ,N\geq1\right).
\end{aligned}
\end{equation}

From (\ref{proof11}) we have that
\begin{equation*}
\begin{aligned}
S_{zp}(y)&=P[N=0]+P(Z_1>y,\ldots,Z_N>y,N\geq1)  \\
&	\overset{(4)}{=}P[N=0]+P(Z_1\geq y,\ldots,Z_N\geq y,N\geq1)-P\left(\mbox{at least one } Z_i=y ,N\geq1\right)  \\
& = a_0 + \sum_{n=1}^{\infty} a_n\left\{ S(y)\right\}^n -\sum_{n=1}^{\infty}b_n\left\{ S(y)\right\}^n\\&=A(S(y))- \sum_{n=1}^{\infty}b_n(S(y)^n).
\end{aligned}
\end{equation*}
\end{proof}

Note that when $N=0$, we have that 
$P\left(\mbox{at least one } Z_i=0, N\geq1\right)=0$ and $P(Z_1>y,\ldots,Z_N>y,N\geq1)=0$,
and $S_{zp}(y)=P[N=0]=a_0$. 
On the other hand, if $N\geq1$, and we observe $y=0$, where $Y=\min(Z_1,Z_2$, $\ldots,Z_N)$, then at least one $Z_i=0$.
Hence, we have that 
$$P\left(\mbox{at least one } Z_i=0 ,N\geq1\right)=\sum_{n=1}^{\infty}b_n=p_1. $$

Additionally,
$\lim_{y \to 0} P(Z_1>y,\ldots,Z_N>y,N\geq1) = 1.$
Therefore, 
$$\lim_{y \to 0}S_{zp}(y)=1-p_1.$$

The unified ZAC model (\ref{szp}) generalizes various survival models with and without zero adjustment. This particularity is demonstrated in the following two Theorems of the ZAC models proposed by \textcolor{black}{Louzada et al.\cite{louzada2018zero}, and Oliveira et al.\cite{de2017zero}.}
\vspace{1cm}

\begin{theorem} \label{teor5}
The Standard Mixture ZAC  
model is given by
\begin{equation}\label{zcuremodel}
S_{zp}(y)=p_0+(1-p_1-p_0)S_p^*(y),
\end{equation}
where
\begin{equation*}
S_p^*(y)=\sum_{n=1}^{\infty} a_n^*\left\{ S(y)\right\}^n.
\end{equation*}
\begin{proof} Let
\begin{equation*}
 \  \ \ a_n^*=\frac{a_n}{1-p_0} \ \  \mbox{ and }  \ \ b_n=p_1a_n^*
\end{equation*}
then
\begin{equation*}
\begin{aligned}
S_{zp}(y)&=p_0+\sum_{n=1}^{\infty} a_n\left\{ S(y)\right\}^n-\sum_{n=1}^{\infty} b_n\left\{ S(y)\right\}^n\\&=p_0+(1-p_0)\sum_{n=1}^{\infty} \frac{a_n}{1-p_0}\left\{ S(y)\right\}^n-\sum_{n=1}^{\infty} p_1a_n^*\left\{ S(y)\right\}^n \\&=p_0+(1-p_0)\sum_{n=1}^{\infty} a_n^*\left\{ S(y)\right\}^n-p_1\sum_{n=1}^{\infty} a_n^*\left\{ S(y)\right\}^n\\&=p_0+(1-p_0)S_p^*(y)-p_1S_p^*(y)\\&=p_0+(1-p_1-p_0)S_p^*(y).
\end{aligned}
\end{equation*}
\end{proof}
\end{theorem}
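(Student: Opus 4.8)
The plan is to exhibit the Standard Mixture ZACR model as the special case of the unified survival function (\ref{szp}) obtained by a particular choice of the defining sequences $\{a_n\}$ and $\{b_n\}$. I would start from $S_{zp}(y)=\sum_{n=0}^{\infty} a_n \{S(y)\}^n - \sum_{n=1}^{\infty} b_n \{S(y)\}^n$ and split off the $n=0$ term, identifying $a_0$ with the cure-rate proportion $p_0$, since $a_0=P(N=0)$. Because the $\{a_n\}$ arise as the probability masses $p_n=P(N=n)$, they satisfy $\sum_{n=0}^{\infty}a_n=1$, hence $\sum_{m=1}^{\infty}a_m=1-p_0$; this normalization constant is exactly what drives the reparametrization in the next step.

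The decisive step is the choice of sequences. I would set $a_m^*=a_m/(1-p_0)$ for $m\geq 1$, so that $\sum_{m=1}^{\infty}a_m^*=1$ and $S_p^*(y)=\sum_{m=1}^{\infty}a_m^*\{S(y)\}^m$ is a proper survival function for the susceptible subpopulation, and I would take the deflation sequence to be proportional to these normalized weights, $b_m=p_1 a_m^*$, with $p_1$ the zero-adjustment proportion. With these substitutions the first sum becomes $(1-p_0)\sum_{m=1}^{\infty}a_m^*\{S(y)\}^m=(1-p_0)S_p^*(y)$ and the second becomes $p_1\sum_{m=1}^{\infty}a_m^*\{S(y)\}^m=p_1 S_p^*(y)$, so that
\begin{equation*}
S_{zp}(y)=p_0+(1-p_0)S_p^*(y)-p_1 S_p^*(y)=p_0+(1-p_1-p_0)S_p^*(y),
\end{equation*}
which is exactly (\ref{zcuremodel}).

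I expect the only real difficulty to be conceptual rather than computational, since the manipulation after the substitution is routine. The content lies in correctly guessing the reparametrization: recognizing that $a_0$ must absorb the cure rate $p_0$, that the surviving weights must be renormalized by $1-p_0$ to yield a bona fide $S_p^*$, and, most importantly, that the zero-adjustment should enter through a sequence $\{b_n\}$ proportional to $\{a_m^*\}$ with a single constant $p_1$. This proportionality is what makes the zero-adjustment act as a clean scalar deflation $(1-p_1-p_0)$ of the susceptible survival function rather than reshaping it. I would close by checking that this choice respects the convergence hypothesis behind (\ref{szp}), which is immediate because $\sum_{m=1}^{\infty} b_m \{S(y)\}^m = p_1 S_p^*(y)$ inherits convergence from $S_p^*$ whenever $0\leq p_1\leq 1$.
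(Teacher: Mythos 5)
Your proposal is correct and follows essentially the same route as the paper's own proof: you identify $a_0$ with $p_0$, normalize the remaining weights via $a_m^*=a_m/(1-p_0)$ to obtain the proper survival function $S_p^*(y)$, choose $b_m=p_1 a_m^*$, and carry out the same algebraic reduction to $p_0+(1-p_1-p_0)S_p^*(y)$. Your added remarks on why $\sum_{m\geq 1}a_m^*=1$ makes $S_p^*$ proper and on convergence of $\sum_m b_m\{S(y)\}^m$ are sound elaborations of points the paper only states implicitly, not a different argument.
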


Note that $S_p^*(y)=P[Y>y|N\geq 1]$ is a proper survival function. Given a proper survival function, we have that
\begin{equation*}
    \lim_{y \to \infty} S_{zp}(y) = p_0,
\end{equation*}
while $S(0)_{zp} =(1-p_1)$, therefore $S_{zp}(y)$ accommodates both zero-adjustment (ZA) and the cure survival fraction. This model was presented by Louzada et al.\cite{louzada2018zero} to describe time-to-default in bank loan portfolios. The sub-density function is given by 
\begin{equation*}
\begin{aligned}
f(y)_{zp}=
\begin{cases}
p_1, & \text{if } y=0, \\
(1-p_1-p_0)f_p^*(y), & \text{if } y>0,
\end{cases}
\end{aligned}
\end{equation*}
where $f_p^*(y)$  is a proper probability density function (pdf) associated with the individuals susceptible to the event of interest.

It should be noted that (\ref{szp}) is necessary to achieve the standard mixture ZAC (\ref{zcuremodel}) model since the zero-adjusted standard distribution cannot be included directly in the unified cure model. 
Following Feller\cite{feller} and Rodrigues et al.\cite{rodrigues2009unification}, the probability generating function $A(u)=\theta+(1-\theta)u$ is used to achieve the standard cure model. Letting $p_0=\theta$ and $u=(1-p_1)S_p^*(y)$, and substituting in $A(\cdot)$, we have that
$
S_{zp}(y)=A((1-p_1)S_p^*(y))=p_0+(1-p_0)(1-p_1)S_p^*(y)=p_0+(1-p_1-p_0+p_0p_1)S_p^*(y),
$
\textcolor{black}{which represents a different parametrization when compared to (\ref{zcuremodel}). The main advantage of the formulation presented in (\ref{zcuremodel}) is that the parameters $p_0$ and $p_1$ are directly linked to the proportions of cure and zero-adjustment, respectively. This feature allows an easy interpretation of model estimates and facilitates the inclusion of covariates according to the research interests.  }

\begin{theorem} \label{teor5}
The  Promotion ZAC model is given by
\begin{equation} \label{promzicr}
S_{zp}(y)=\exp(-\theta F(y))-\frac{\exp(-\theta F(y)-\tau)-\exp(-\theta-\tau)}{1-\exp(-\theta)}
\end{equation} 
where $F(\cdot)$ is the cumulative distribution function of the baseline distribution of the random variable Y. The expression above is equivalent to
\begin{equation*}
S_{zp}(y)=\exp(-\theta)+ [1-\exp(-\tau)-\exp(-\theta)]S_p^*(y),
\end{equation*}
where
\begin{equation*}
S_p^*(y)=\frac{\exp(-\theta F(y))-\exp(-\theta)}{1-\exp(-\theta)}.
\end{equation*}

\end{theorem}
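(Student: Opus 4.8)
The plan is to obtain the Promotion ZACR model as the special case of the unified model (\ref{szp}) in which the number of competing causes $N$ follows a Poisson distribution, mirroring the argument already used for the Standard Mixture model (\ref{zcuremodel}). First I would specialize the coefficient sequence to the Poisson law by setting $a_n = e^{-\theta}\theta^n/n!$, so that the generating function is $A(s)=\sum_{n\ge 0} a_n s^n = e^{-\theta(1-s)}$. Evaluating at $s=S(y)$ and using $F(y)=1-S(y)$ collapses the first series to $A(S(y))=e^{-\theta F(y)}$, which is precisely the leading promotion term in (\ref{promzicr}).

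Next I would fix the zero-adjustment sequence exactly as in the mixture derivation: take the cure rate to be $p_0=a_0=e^{-\theta}$, set $a_n^*=a_n/(1-p_0)$, and put $b_n = p_1 a_n^*$ with the zero-adjustment proportion chosen as $p_1=e^{-\tau}$. Summing the resulting exponential series is the only genuine computation: since $\sum_{n\ge 1}(\theta s)^n/n! = e^{\theta s}-1$, one obtains $\sum_{n\ge 1} b_n\{S(y)\}^n = \frac{e^{-\tau}}{1-e^{-\theta}}\bigl(e^{-\theta F(y)}-e^{-\theta}\bigr) = p_1 S_p^*(y)$, where $S_p^*(y)=\frac{e^{-\theta F(y)}-e^{-\theta}}{1-e^{-\theta}}$. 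Substituting both pieces into (\ref{szp}) yields $S_{zp}(y)=e^{-\theta F(y)}-\frac{e^{-\tau}\bigl(e^{-\theta F(y)}-e^{-\theta}\bigr)}{1-e^{-\theta}}$, which is the first displayed expression once the numerator is rewritten as $e^{-\theta F(y)-\tau}-e^{-\theta-\tau}$.

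Finally I would establish the equivalence of the two displayed forms. Rather than re-expanding, I would invoke the mixture representation (\ref{zcuremodel}) directly with $p_0=e^{-\theta}$ and $p_1=e^{-\tau}$, giving $S_{zp}(y)=p_0+(1-p_1-p_0)S_p^*(y)=e^{-\theta}+(1-e^{-\tau}-e^{-\theta})S_p^*(y)$, the second stated form; a one-line manipulation (clearing the denominator and cancelling the $p_0$ term against $(1-p_0)S_p^*$) confirms it agrees with the first. I do not expect a real obstacle here, as the argument is essentially bookkeeping. The only points requiring a word of care are convergence---both $A(S(y))$ and $\sum_{n\ge 1} b_n\{S(y)\}^n$ converge for $S(y)\in[0,1]$ because they are rescaled restrictions of the everywhere-convergent exponential series---and checking that $p_0=e^{-\theta}$ and $1-p_1=1-e^{-\tau}$ correctly recover the cure-rate and zero-adjusted proportions through $\lim_{y\to\infty}S_{zp}(y)$ and $S_{zp}(0)$.
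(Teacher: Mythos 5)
Your proposal is correct and takes essentially the same route as the paper: both specialize the unified model (\ref{szp}) to Poisson coefficients $a_n=e^{-\theta}\theta^n/n!$ with $b_n=e^{-\tau}a_n^*$, where $a_n^*=a_n/(1-e^{-\theta})$, and sum the resulting exponential series to identify $S_p^*(y)=\bigl(e^{-\theta F(y)}-e^{-\theta}\bigr)/\bigl(1-e^{-\theta}\bigr)$. The only difference is presentational: you compute the closed form $A(S(y))=e^{-\theta F(y)}$ to land on (\ref{promzicr}) first and then invoke the already-proved mixture theorem for the second display, whereas the paper repeats the mixture-style coefficient algebra to reach the mixture form directly and obtains (\ref{promzicr}) by substituting $S_p^*(y)$.
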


\begin{proof}
It is easy to show that 

\begin{equation}\label{sum1a}
\sum_{n=1}^{\infty} \frac{\theta^n \exp(-\theta)}{n!(1-\exp(-\theta))}\left\{ S(y)\right\}^n=\frac{\exp(-\theta F(y))-\exp(-\theta)}{1-\exp(-\theta)}.
\end{equation}

Recalling the equation

\begin{equation*}
\begin{aligned}
S_{zp}(y)&=a_0+\sum_{n=1}^{\infty} a_n\left\{ S(y)\right\}^n -\sum_{n=1}^{\infty}b_n\left\{ S(y)\right\}^n .
\end{aligned}
\end{equation*}

By considering that N follows a Poisson with parameter $\theta$, i.e., $a_n=\theta^n e^{-\theta}/n!$, then, if $b_n=\exp(-\tau)a_n^*$, where
\begin{equation*}
a_n^*=\frac{a_n}{(1-\exp(-\theta))}=\frac{\theta^n \exp(-\theta)}{n!(1-\exp(-\theta))}I_{\{1,2,\ldots\}}(n).
\end{equation*}

From (\ref{sum1a}), we have that
\begin{equation*}
\begin{aligned}
S_{zp}(y)&=a_0+(1-\exp(-\theta))\sum_{n=1}^{\infty} \frac{a_n}{(1-\exp(-\theta))}\left\{ S(y)\right\}^n -\sum_{n=1}^{\infty}b_n\left\{ S(y)\right\}^n \\& = \exp(-\theta)+(1-\exp(-\theta))\sum_{n=1}^{\infty} a_n^*\left\{ S(y)\right\}^n-\exp(-\tau)\sum_{n=1}^{\infty} a_n^*\left\{ S(y)\right\}^n\\& =\exp(-\theta)+(1-\exp(-\theta))S_p^*(y)-\exp(-\tau)S_p^*(y)\\&=\exp(-\theta)+(1-\exp(-\tau)-\exp(-\theta))S_p^*(y).
\end{aligned}
\end{equation*}

\end{proof}

The identifiability for the promotion cure model can be ensured (see Oliveira et al. \cite{de2017zero}) when covariates are included in both parameters related to the susceptible fraction and the cure fraction of individuals. On the other hand, mixture cure models have fewer restrictions (Li, et al. \cite{li2001identifiability}; Hanin and Huang \cite{hanin2014identifiability}) in practical applications so that mixture cure models are more popularly used and more intuitively interpretable than promotion cure models. 

In addition to the models (\ref{zcuremodel}) and (\ref{promzicr}), it is possible to assume more flexible probability distributions for the number of competing causes, $N$. Therefore, below, we present a more flexible particular case of the unified ZAC model, assuming a Negative Binomial (NB) distribution for the unobservable initial number of competing causes, $N$. In this scenario, the probability mass function of $N$ is given by


\begin{equation}\label{negrisl}
P[N=n;\theta,\eta]=\frac{\Gamma(\eta^{-1}+n)}{\Gamma(\eta^{-1})n!}\left(\frac{\eta\theta}{1+\eta\theta} \right)^n(1+\eta\theta)^{-1/\eta}, n=0,1,2,\ldots,
\end{equation}
where $\theta>0$, $\eta\geq-1$ and $\eta\theta+1>0$. 
Since $E(N)=\theta$ and $Var(N)=\theta+\eta\theta^2$, it is worth noting that positive values for $\eta$ correspond to an over-dispersion. In contrast, negative values correspond to an under-dispersion relative to the Poisson distribution.
The NB distribution has a variety of particular cases: Bernoulli ($\eta = -1$), Poisson ($\eta \rightarrow 0 $) and Geometric ($\eta = 1$).

The  ZAC model with NB causes, hereafter the ZAC-NB model, is given by
\begin{equation}\label{eqsugennb}
S_{zp}(y)=(1+\pi_0\eta \theta F(y))^{-1/\eta}-\frac{(1+\pi_0\eta \theta F(y))^{-1/\eta}-(1+\pi_0\eta \theta)^{-1/\eta}}{(1+\pi_1\eta \theta)^{-1/\eta}\left\{1-(1+\pi_0\eta \theta)^{-1/\eta}\right\}},
\end{equation}
where $0\leq\pi_1\leq1$ and $0\leq\pi_0\leq1-p_1$, where $p_1$ . Note that if $\pi_1=0$, then $(1+\pi_1\eta \theta)^{-1/\eta}=0$, the equation (\ref{eqsugennb}) reduces to 
$S(y)_{p}=(1+\pi_0\eta \theta F(y))^{-1/\eta}$, i.e., the  model proposed by Rodrigues et al.\cite{rodrigues2009unification}.  

In terms of mixture distribution, the ZAC-NB can be written as
\begin{equation} \label{BNZICRsur}
S_{zp}(y)=\left(\frac{1}{1+\eta\theta \pi_0} \right)^{\frac{1}{\eta}}+\left(1-\left(\frac{1}{1+\eta\theta \pi_0} \right)^{\frac{1}{\eta}} -\left(\frac{1}{1+\eta\theta \pi_1} \right)^{\frac{1}{\eta}}\right)S_p^*(y)
\end{equation}
where
\begin{equation}\label{eqlonb}
S_p^*(y)=\frac{(1+\pi_0\eta \theta F(y))^{-1/\eta}-(1+\pi_0\eta \theta)^{-1/\eta}}{1-(1+\pi_0\eta \theta)^{-1/\eta}},
\end{equation}
where $S_p^*(0)=1$ and $S_p^*(\infty)=0$, i.e. (\ref{eqlonb}) is a proper survival function. The density function for the non-cured population without ZA is given by
\begin{equation}\label{eqlonb2}
f_p^*(y)=\frac{\pi_0\theta f(y)(1+\pi_0\eta \theta F(y))^{-1/\eta-1}}{1-(1+\pi_0\eta \theta)^{-1/\eta}}.
\end{equation}

From $S_{zp}(y)$ given in (\ref{eqsugennb}), the density function of the ZAC-NB model is given by
\begin{equation*}
f_{zp}(y)=
\begin{cases}
\left(\frac{1}{1+\eta\theta \pi_1} \right)^{\frac{1}{\eta}}, & \text{if } y=0 \\
\left(1-\left(\frac{1}{1+\eta\theta \pi_0} \right)^{\frac{1}{\eta}} -\left(\frac{1}{1+\eta\theta \pi_1} \right)^{\frac{1}{\eta}}\right)f_p^*(y), & \text{if } y>0.
\end{cases}
\end{equation*}

 When $\eta = -1$ and $\eta \rightarrow 0$, the survival presented in (\ref{BNZICRsur}) reduces to the standard mixture ZAC and promotion ZAC models, respectively.  Additionally, when $\eta = 1$, we obtain the ZAC  model with Geometrically distributed causes (ZAC-Geo).

\subsection{Estimation}

In this section, we present the details about  the maximum likelihood estimators (MLE) for the parameters of the survival model (\ref{eqsugennb}) under censored data. Suppose that $T_i$ and $C_i$ are the lifetime and censoring time, respectively, for the $i$th  individual. Thus, the data is given by the \textcolor{black}{pairs 
 $\mathcal{D}=(y_i,\delta_i)$,} where $y_i=\min(T_i,C_i)$, $\delta_i=I(T_i\leq C_i)$, 
$i=1,2,\ldots m$ and $m$ is the sample size.
%
Then, assuming a random censoring \textcolor{black}{process, 
the }likelihood function related to the ZAC-NB is given by

{\small
\begin{equation*}\label{eqveroc3}
\begin{aligned}
 l(&\boldsymbol{\xi};\mathcal{D})=r\log\left(1-(1+\pi_0\eta \theta)^{-1/\eta}-(1+\pi_1\eta \theta)^{-1/\eta}\right)-r\log\left(1-(1+\pi_0\eta \theta)^{-1/\eta}\right)\\&+\sum_{i:y_i=0}\log\left(\frac{1}{1+\eta\theta \pi_1} \right)^{\frac{1}{\eta}} +r\log(\pi_0\theta)+\sum_{i:y_i>0}\log\left( f(y_i)(1+\pi_0\eta \theta F(y_i))^{-1/\eta} \right)\\& +\sum_{i:y_i>0}{(1-\delta_i)}\log \left\{ (1+\pi_0\eta \theta F(y_i))^{-1/\eta}-\frac{(1+\pi_0\eta \theta F(y_i))^{-1/\eta}-(1+\pi_0\eta \theta)^{-1/\eta}}{(1+\pi_1\eta \theta)^{1/\eta}\left\{1-(1+\pi_0\eta \theta)^{-1/\eta}\right\}} \right\} ,
\end{aligned}
\end{equation*}}
where $r=\sum_{i:y_i>0} \delta_i$ and $\boldsymbol{\xi}$ is a vector with $k$ parameters associated to $f_{zp}(y_i;\boldsymbol{\xi})$. The corresponding score function is $U(\boldsymbol{\xi})=\partial l(\boldsymbol{\xi};\mathcal{D})/\partial \xi_j, j=1,\ldots,k$. The maximum likelihood (ML) estimation for the parameter vector $\boldsymbol{\xi}$ can be implemented from the numerical maximization of the log-likelihood function (\ref{eqveroc3}).


To obtain the confidence intervals (CI) for the model parameters, it is reasonable to assume that the MLE is asymptotically distributed with the normal joint distribution given by
%
\begin{equation*} \boldsymbol{\hat{\xi}} \sim N_k[\boldsymbol{\xi},I^{-1}(\boldsymbol{\xi})] \mbox{ as } m \to \infty , \end{equation*}
where $I(\boldsymbol{\xi})$ is the Fisher information matrix, $k\times k$, and 
\begin{equation*}\label{fisherinf}
I_{jl}(\boldsymbol{\xi})=E\left[-\frac{\partial^2}{\partial \xi_j \partial \xi_l}l(\boldsymbol{\xi};\mathcal{D})^2\right],  j,l=1,\ldots,k.
\end{equation*}
As it is not trivial to obtain $I(\boldsymbol{\xi})$, we can consider the observed information matrix, $
H_{jl}(\boldsymbol{\xi})=-\frac{\partial^2}{\partial \xi_j \partial \xi_l}l(\boldsymbol{\xi};\mathcal{D})^2,\ j,l=1,\ldots,k.$
Therefore, we can obtain the approximated CI for a specific parameter, $\xi_j$, $j=1,\ldots,k$, assuming that the marginal distributions are given by
$\hat{\xi_j} \sim N[\xi_j,H^{-1}_{jj}(\boldsymbol{\xi})] \mbox{ for } m \to \infty$.

\section{Simulation study}
\subsection {Evaluating performance of point and interval estimates }

We performed a  Monte Carlo simulation study to assess the MLE and asymptotic CI performance for finite samples. Therefore, to measure the performance of point estimates, we consider bias and root of mean square errors (RMSE), which are obtained as follows 
\begin{equation*}
\f{Bias}_j=\frac{1}{B}\sum_{b=1}^{B}(\hat\theta_{j,b}-\theta_j) \ ,  \quad \textcolor{black}{ \f{RMSE}_j=\sqrt{\frac{1}{B}\sum_{b=1}^{B}(\hat\theta_{j,b}-\theta_j)^2},}
\end{equation*} 
$\f{for} j=1,\ldots,k$ where $B=10,000$ is the number of simulated samples and $\hat\theta_{j,b}$ is the estimate of $\theta_j$ in the $b$th sample. In this scenario, the Bias and the RMSEs should be close to zero. 
To measure the performance of asymptotic CIs, we present the coverage probability (CP), which should be close to the nominal confidence level of 95\%. 


The samples were simulated  assuming the competing causes are i.i.d. with NB distribution given in (\ref{negrisl}), with $\eta$ known: $\eta=-1$ (standard mixture ZAC), $\eta\rightarrow 0$ (promotion ZAC), $\eta=0.5$, $\eta=1$ (NB-Geo) and $\eta=2$. Furthermore, we assume that the unobservable baseline distribution $F(y)$ follows a lognormal distribution where $\mu$ and $\sigma$ are the location and scale parameters.  The chosen values of the simulation parameters were:

\begin{itemize}
    \item Standard mixture ZAC: $(\mu,\sigma,p_0,p_1)=(2,1,0.3,0.1)$ 
    \item Promotion ZAC:  $(\mu,\sigma,\tau,\theta)=(2,1,1.2,2.3)$ 
    \item ZAC-NB with $\eta=0.5$ : $(\mu,\sigma,\theta\pi_0,\theta\pi_1)=(4,1.5,3,3)$
    \item ZAC-Geo $(\mu,\sigma,\theta\pi_0,\theta\pi_1)=(5,1.5,5,5)$
    \item ZAC-NB with $\eta=2$: $(\mu,\sigma,\theta\pi_0,\theta\pi_1)=(5,1.5,19,19).$
\end{itemize}  

\textcolor{black}{The values were above were chosen to obtain different levels of censoring where the mean proportion of right-censored data are 0.351 ($\eta=-1$), 0.372 ($\eta\rightarrow 0$), 0.239 ($\eta=0.5$), 0,260 ($\eta=1$) and 0,194($\eta=0.2$).} The methodology used to simulate censored data is based on \cite{ramos2024sampling}. The maximum likelihood estimates were achieved by numerical optimization using the \textit{optim} function available in the R software.

Table \ref{tab:simu1} presents the Bias, MSEs, and CP for the obtained estimates for different sample sizes. The results indicate that Bias and RMSE are closer to zero, and the empirical coverage probabilities are closer to the nominal coverage level as the sample size increases. These results are expected if the underlying estimation scheme works correctly to produce consistent and asymptotically normal estimates.  The results
were similar for other choices of $\boldsymbol{\xi}$. Regarding the stability of the estimates, we found that, on average, in 1.7\% of the samples, the mle could not find the estimates due to the lack of convergence. This result indicates no problem of instability in the estimates, which is one of the main consequences of the lack of identifiability in models.

\textcolor{black}{It is worth mentioning that we observed the RMSE and bias for the parameters \(p_0\) and \(p_1\) are notably higher when using the ZAC-NB model in scenarios with small sample sizes (\(n=50\) or \(n=100\)) and \(\eta = 2\). This behavior may be attributed to the complexity of the Negative Binomial distribution, which introduces additional variability in the estimation process. Furthermore, the challenge is compounded by the simultaneous estimation of four parameters and the presence of right-censored data, which, due to the small sample size available for estimation, contributes to larger bias and variability in the parameter estimates, especially for smaller samples. However, as the sample size increases, the bias and RMSE decrease substantially, consistent with the asymptotic properties of the maximum likelihood estimators. This improvement is reflected in the coverage probabilities, which approach their nominal levels for larger sample sizes. These findings highlight the sensitivity of the ZAC-NB model to small sample sizes.
}

\begin{landscape}

 %
   
\begin{table}[htbp]
  \centering
  \caption{Bias, root of mean
square error (RMSE) and coverage probability (CP) of 95\% 
CIs for the parameters in the ZAC model with $\eta=-1$, $\eta\rightarrow0$, $\eta=0.5$, $\eta=1$ and $\eta=2$    and lognormal baseline distribution.}
  \resizebox{21cm}{!}{%
    \begin{tabular}{lccccccccccccccccccccc} \hline
    &      & \multicolumn{4}{c}{\textbf{Mixture} ($\eta=-1$) }   & \multicolumn{4}{c}{\textbf{Promotion} ($\eta\rightarrow0$) } & \multicolumn{4}{c}{\textbf{ZAC-NB} ($\eta=0.5$) }                & \multicolumn{4}{c}{  \textcolor{black}{\textbf{ZAC-Geo} ($\eta=1$)} }    & \multicolumn{4}{c}{ \textcolor{black}{\textbf{ZAC-NB} ($\eta=2$)} }   \\ 
    n     & & \multicolumn{1}{c}{Bias} & \multicolumn{1}{c}{RMSE} & \multicolumn{1}{c}{CP} &       & \multicolumn{1}{c}{ Bias } & \multicolumn{1}{c}{RMSE} & \multicolumn{1}{c}{ CP} &       &       & \multicolumn{1}{c}{Bias} & \multicolumn{1}{c}{RMSE} & \multicolumn{1}{c}{CP} &       & \multicolumn{1}{l}{Bias} & \multicolumn{1}{c}{RMSE} & \multicolumn{1}{c}{CP} &       & \multicolumn{1}{c}{Bias} & \multicolumn{1}{c}{RMSE} & \multicolumn{1}{c}{CP} \\ \hline
    \multicolumn{1}{c}{50} &  $\mu$  & 0.0100 & 0.2276 & 0.941 &  $\mu$  & 0.0137 & 0.2685 & 0.941 &  $\mu$  & & 0.1514 & 0.9754 & 0.899 &  $\mu$  & 0.0963 & 0.9310 & 0.896 &  $\mu$  & -0.0898 & 0.9066 & 0.897 \\
                       &  $\sigma$  & -0.0243 & 0.1705 & 0.902 &  $\sigma$  & -0.0159 & 0.1711 & 0.914 &  $\sigma$  &       & -0.0241 & 0.2272 & 0.917 &  $\sigma$  & -0.0302 & 0.2179 & 0.931 &  $\sigma$  & -0.0560 & 0.2094 & 0.928 \\
                       &  $p_0$  & 0.0014 & 0.0418 & 0.886 &  $\tau$  & 0.148 & 0.8659 & 0.971 &  $\tau$  &       & 0.0007 & 0.0737 & 0.907 &  $\tau$  & 1.6603 & 6.2415 & 0.899 &  $\tau$  & 4.1197 & 17.7588 & 0.878 \\
                       &  $p_1$  & -0.0044 & 0.0770 & 0.942 &  $\theta$  & 0.0394 & 0.2735 & 0.957 &  $\theta$  &       & -0.0003 & 0.0497 & 0.963 &  $\theta$  & 0.8074 & 3.2080 & 0.937 &  $\theta$  & 4.5027 & 17.2923 & 0.917 \\ \hline
    \multicolumn{1}{r}{150} &  $\mu$  & 0.0057 & 0.1277 & 0.948 &  $\mu$  & 0.0026 & 0.1466 & 0.944 &  $\mu$  &       & 0.0532 & 0.3742 & 0.944 &  $\mu$  & 0.0380 & 0.4326 & 0.942 &  $\mu$  & 0.0272 & 0.5348 & 0.936 \\
                       &  $\sigma$  & -0.0060 & 0.0960 & 0.934 &  $\sigma$  & -0.007 & 0.0959 & 0.935 &  $\sigma$  &       & -0.0007 & 0.1083 & 0.953 &  $\sigma$  & -0.0058 & 0.1091 & 0.949 &  $\sigma$  & -0.0079 & 0.1104 & 0.946 \\
                       &  $p_0$  & -0.0005 & 0.0247 & 0.921 &  $\tau$  & 0.0346 & 0.2645 & 0.958 &  $\tau$  &       & -0.0047 & 0.0366 & 0.957 &  $\tau$  & 0.4094 & 2.0255 & 0.934 &  $\tau$  & 2.5987 & 10.5494 & 0.913 \\
                       &  $p_1$  & -0.0009 & 0.0436 & 0.949 &  $\theta$  & 0.011 & 0.1429 & 0.950 &  $\theta$  &       & -0.0002 & 0.0303 & 0.943 &  $\theta$  & 0.2214 & 1.2578 & 0.945 &  $\theta$  & 2.2886 & 9.3500 & 0.928 \\ \hline
    \multicolumn{1}{r}{300} &  $\mu$  & 0.0004 & 0.0890 & 0.948 &  $\mu$  & 0.0002 & 0.1021 & 0.946 &  $\mu$  &       & 0.0048 & 0.2078 & 0.946 &  $\mu$  & 0.0172 & 0.2589 & 0.951 &  $\mu$  & 0.0149 & 0.3503 & 0.949 \\
                       &  $\sigma$  & -0.0043 & 0.0669 & 0.947 &  $\sigma$  & -0.0029 & 0.0666 & 0.947 &  $\sigma$  &       & -0.0053 & 0.0671 & 0.953 &  $\sigma$  & -0.0021 & 0.0706 & 0.949 &  $\sigma$  & -0.0029 & 0.0725 & 0.952 \\
                       &  $p_0$  & -0.0001 & 0.0174 & 0.936 &  $\tau$  & 0.0132 & 0.1794 & 0.953 &  $\tau$  &       & 0.0006 & 0.0237 & 0.953 &  $\tau$  & 0.1616 & 0.9917 & 0.948 &  $\tau$  & 1.3043 & 6.8629 & 0.938 \\
                       &  $p_1$  & -0.0001 & 0.0308 & 0.949 &  $\theta$  & 0.0053 & 0.1000   & 0.951 &  $\theta$  &       & -0.0003 & 0.0210 & 0.951 &  $\theta$  & 0.1084 & 0.8230 & 0.953 &  $\theta$  & 1.1499 & 6.0057 & 0.944 \\ \hline
    \multicolumn{1}{r}{500} &  $\mu$  & 0.0012 & 0.0686 & 0.949 &  $\mu$  & 0.0021 & 0.0785 & 0.953 &  $\mu$  &       & -0.0024 & 0.1458 & 0.953 &  $\mu$  & 0.0034 & 0.1804 & 0.954 &  $\mu$  & 0.0132 & 0.2585 & 0.952 \\
                       &  $\sigma$  & -0.0016 & 0.0525 & 0.944 &  $\sigma$  & -0.0009 & 0.0516 & 0.948 &  $\sigma$  &       & -0.0042 & 0.0490 & 0.949 &  $\sigma$  & -0.0027 & 0.0515 & 0.951 &  $\sigma$  & -0.0006 & 0.0539 & 0.951 \\
                       &  $p_0$  & 0.0000 & 0.0134 & 0.946 &  $\tau$  & 0.0081 & 0.1386 & 0.949 &  $\tau$  &       & -0.0001 & 0.0181 & 0.955 &  $\tau$  & 0.0769 & 0.6824 & 0.954 &  $\tau$  & 0.7510 & 4.7823 & 0.943 \\
                       &  $p_1$  & -0.0006 & 0.0236 & 0.950 &  $\theta$  & 0.0056 & 0.0766 & 0.952 &  $\theta$  &       & 0.0002 & 0.0157 & 0.962 &  $\theta$  & 0.0636 & 0.6211 & 0.951 &  $\theta$  & 0.6342 & 4.2879 & 0.946 \\ \hline
    \end{tabular}%
    }%
  \label{tab:simu1}%
\end{table}%

\end{landscape}

\newpage
\subsection {Evaluating mis-specification }

As highlighted above, the main advantage of the unified ZAC model is the flexibility for adjusting different distributions related to competing causes. In this context, it is crucial to assess whether it is possible to discriminate among models, correctly identifying the true data distribution. 
Such discrimination can be made by considering a model mis-specification study addressed here. 

Four scenarios are considered:

\begin{itemize}
\item Scenario I: Generate samples from ZAC-NB model with $\eta=0.5$ and Log-Normal baseline ($\mu=4, \sigma=1.5, p_0= 0.16, p_1= 0.16  $ )  and fit the data with estimators based on ZAC-NB model with $\eta=-1$ and the same baseline distribution;
  \item Scenario II: Generate samples from ZAC-NB model with $\eta=2$ and Log-Normal baseline ($\mu=4 , \sigma=1.5, p_0= 0.16, p_1= 0.16  $ ) and fit the data with estimators based on ZAC-NB model with $\eta \rightarrow$ 0 and the same baseline distribution;
 \item Scenario III: Generate samples from ZAC-NB model with $\eta=-1$ and Gompertz baseline ($\alpha=0.1, \beta=1.5, p_0= 0.16, p_1= 0.16$)  and fit the data with estimators based on ZAC-NB model with a Log-Normal baseline and the same value of $\eta$;
 \item Scenario IV: Generate samples from ZAC-NB model with $\eta=0.5$ and Gompertz  baseline ($\alpha=0.1, \beta=1.5, p_0= 0.16, p_1= 0.16$) and fit the data with estimators based on ZAC-NB model with a Log-Normal baseline and the same value of $\eta$;
  \item Scenario I.a: Generate samples from ZAC-NB model with $\eta=0.5$ and Log-Normal baseline ($\mu=4, \sigma=1.5, p_0= 0.16, p_1= 0.16$), as in the scenario I, but fitting the data based on promotion ZAC model ($\eta \rightarrow 0$) and the same baseline distribution;
   \item Scenario I.b: Generate samples from ZAC-NB model with $\eta=0.5$ and Log-Normal baseline ($\mu=4, \sigma=1.5, p_0= 0.25, p_1= 0.25  $ )  and fit the data with estimators based on ZAC-NB model with $\eta = -1 $ and the same baseline distribution;
\end{itemize}

In the scenarios III and IV, we consider the Gompertz distribution, with pdf given by $  F(t | \alpha, \beta) = 1 - \exp\{-\frac{\beta}{\alpha} [\exp(\alpha t) - 1]\}  $. Scenarios I.a and I.b are variations of scenario I, so that in I.a, the $\eta$ values are closer and in I.b, the long-term proportions and zero-adjustment are higher.
For each scenario, 1,000 samples were generated, and the Akaike criterion (AIC) \cite{akaike1974new} was calculated for the fitted model. The observed AIC for the misfitted was compared with the AIC for the model, considering the correct data distribution. Table \ref{tab:tab2}  presents the proportion of samples that the AIC for the misfitted model is greater than the AIC for the model considering the correct data distribution, i.e., where the AIC correctly discriminate the distribution.
\begin{table}[!h]
  \centering
  \caption{{Proportion of samples which the AIC for the mis-fitted model is greater than the AIC for the correct model in each simulation scenario}}
    \begin{tabular}{lrrrrrr} \hline
       & \multicolumn{4}{c}{Scenarios} \\ 
      $n$       & \multicolumn{1}{l}{I}    &\multicolumn{1}{c}{II} & \multicolumn{1}{c}{III} & \multicolumn{1}{c}{IV} & \multicolumn{1}{c}{\textcolor{black}{I.a}}& \multicolumn{1}{c}{\textcolor{black}{I.b}} \\ \hline
    50  & 0.497   & 0.621 & 0.500   & 0.399& 0.467&0.469 \\
    150 & 0.664   & 0.767 & 0.769 & 0.610& 0.532& 0.584\\
    300 & 0.790   & 0.852 & 0.928 & 0.828& 0.624& 0.657 \\
    500 & 0.838  & 0.947  & 0.978 & 0.927 & 0.630& 0.740\\ \hline
    \end{tabular}%
  \label{tab:tab2}%
\end{table}%

The AIC presents better discrimination for all scenarios as the sample size increases. Most cases present discrimination above 60\% when $n=150$ and above 70\% when $n=300$. \textcolor{black}{In scenarios Ia and Ib, the compared models exhibit very similar density functions, making discrimination between them inherently challenging, particularly when the sample size is small. This issue arises not from limitations in the proposed approach but from the intrinsic similarity between the models in these cases. Adequate discrimination in such scenarios requires larger sample sizes to better capture subtle differences between the models. To address this limitation, future work will explore alternative estimation methods, such as Bayesian inference, and the incorporation of covariates, which are expected to enhance the discriminatory power of the method. Overall, these results indicate that discrimination improves as the values of $\eta$ are distant from each other and the proportion of cure or zero-adjustment decreases.}

\section{Application: A Sub-Saharan African obstetric data}

This section presents the results of applying the proposed model in a sub-Saharan African obstetric study, developed by WHO as part of the BOLD project \cite{souza2015development}. Because this project aims to improve the quality of intrapartum care and generate evidence-based tools, it is essential to consider all events involved in health care management to ensure reliability.
Concerning intrapartum care and, notably, the labor time (i.e., the time between hospital admission and vaginal birth), three different women groups can be observed: a proportion of women with fetal death at admission (representing the zero adjustments), a proportion who undergo to C-section, and the third group, which presents a normal progression, with vaginal delivery. Women who underwent C-section have their natural delivery time reduced and, therefore, their labour times were considered censored. On the other hand, all women with vaginal deliveries have full-time records, i.e., non-censored. The critical point is that for the women who arrive at the hospital already having a stillbirth, \textcolor{black}{the time is generally not registered}, leading to the necessity to consider that the time is equal to zero (T=0). 
 Usual survival models do not allow us to consider lifetimes equal to zero. Therefore, for the model proposed in Section \ref{sec2}, it is possible to account for this feature and consider the three women groups in the process of survival estimation concomitantly.
 
 The present dataset contains labor times of $7,062$ pregnant women, selected according to clinical characteristics of interest. In this sample, 319 women presented labour time equal to zero, and 2796 presented censored times.

\begin{table}[!h]
  \centering
  \caption{  Akaike criterion for the standard mixture ZAC, promotion ZAC, ZAC-NB, and ZAC-Geo models with different baseline distributions: exponential, log-normal, Gompertz and inverse Gaussian.}
    \begin{tabular}{lcccc} \toprule
    \textbf{Baseline } & \multicolumn{1}{c}{S. Mixture} & \multicolumn{1}{c}{Promotion} & \multicolumn{1}{c}{NB ($\eta=0.5$)} & \multicolumn{1}{c}{Geo ($\eta=1$)} \\ \hline
    Exponential & 31716,07 & 31737,41 & 31920,84 & 32285,13 \\
    Log-normal & \textbf{31034,95} & \textbf{31035,61} & \textbf{31033,08} & \textbf{31029,67} \\
    Gompertz & 31736,83 & 31705,85 & 31732,49 & 31724,32 \\
    Inverse Gaussian & 31472,52 & 32027,4 & 32397,49 & 32767,6 \\ \bottomrule
    \end{tabular}%
  \label{tab:aic}%
\end{table}%

To fit the proposed model to the data, it is important to evaluate which $\eta$ value is more appropriate and which base distribution best fits the data. In this way, we fitted models with four different $\eta$ values, including the main particular cases of the unified model: standard mixture ZAC model ($\eta=-1$), promotion ZAC model($\eta\rightarrow 0$), ZAC-NB ($\eta = 0.5$) and ZAC-Geo($\eta=1$). Additionally, for each possible value of $\eta$, we fitted four different baseline distributions: exponential, log-normal, Gompertz, and inverse Gaussian. Thus, 16 possible models were evaluated and, in Table \ref{tab:aic} presents the Akaike criterion (AIC) values for each model. It is possible to observe that, in all adjusted models, the log-normal distribution was the one with the lowest AIC values in all the particular cases of the unified model.  This result corroborates with what is observed in the literature. The log-normal distribution is known for its utility in the childbirth context \cite{zhang2010contemporary}.
 In this sense, considering the log-normal as the baseline distribution, Figure \ref{fig:fit} presents fitted survival for each model compared with Kaplan-Meier estimates, and Table \ref{tab:fit} presents the MLE and standard deviations for the main particular cases of the unified model.

\begin{center}
\begin{table*}[!h]%
{\scriptsize
\centering
  \caption{Maximum likelihood estimates (MLE) and Standard deviations (SD) for the standard mixture ZAC, promotion ZAC, ZAC-NB, and ZAC-Geo models {with log-normal baseline distribution.} }
       \begin{tabular}{lccrccrccccc} \toprule
& \multicolumn{2}{c}{S. Mixture } &       & \multicolumn{2}{c}{Promotion} &       & \multicolumn{2}{c}{NB ($\eta=0.5$)} &       & \multicolumn{2}{c}{Geo ($\eta=1$)} \\ 
  & \multicolumn{1}{r}{MLE} & \multicolumn{1}{r}{SD} & & \multicolumn{1}{r}{MLE} & \multicolumn{1}{r}{SD} &  & \multicolumn{1}{r}{MLE} & \multicolumn{1}{r}{SD} & \multicolumn{1}{l}{} & \multicolumn{1}{r}{MLE} & \multicolumn{1}{r}{SD} \\
   \midrule
    $\mu$ & \multicolumn{1}{r}{2.167} & \multicolumn{1}{r}{0.024} & \multicolumn{1}{l}{$\mu$} & \multicolumn{1}{r}{3.200} & \multicolumn{1}{r}{0.088} & \multicolumn{1}{l}{$\mu$} & \multicolumn{1}{r}{4.151} & \multicolumn{1}{r}{0.185} & \multicolumn{1}{l}{$\mu$} & \multicolumn{1}{r}{5.8163} & \multicolumn{1}{r}{0.4375} \\
    $\sigma$ & \multicolumn{1}{r}{1.035} & \multicolumn{1}{r}{0.016} & \multicolumn{1}{l}{$\sigma$} & \multicolumn{1}{r}{1.259} & \multicolumn{1}{r}{0.025} & \multicolumn{1}{l}{$\sigma$} & \multicolumn{1}{r}{1.430} & \multicolumn{1}{r}{0.034} & \multicolumn{1}{l}{$\sigma$} & \multicolumn{1}{r}{1.6848} & \multicolumn{1}{r}{0.0490} \\
    $p_1 $ & \multicolumn{1}{r}{0.045} & \multicolumn{1}{r}{0.003} & \multicolumn{1}{l}{$\tau $} & \multicolumn{1}{r}{3.092} & \multicolumn{1}{r}{0.058} & \multicolumn{1}{l}{$\theta \pi_1$} & \multicolumn{1}{r}{7.383} & \multicolumn{1}{r}{0.058} & \multicolumn{1}{l}{$\theta \pi_1$} & \multicolumn{1}{r}{21.0093} & \multicolumn{1}{r}{0.0575} \\
    $p_0 $ & \multicolumn{1}{r}{0.038} & \multicolumn{1}{r}{0.011} & \multicolumn{1}{l}{$\theta$} & \multicolumn{1}{r}{3.248} & \multicolumn{1}{r}{0.221} & \multicolumn{1}{l}{$\theta \pi_0$} & \multicolumn{1}{r}{9.726} & \multicolumn{1}{r}{0.259} & \multicolumn{1}{l}{$\theta \pi_0$} & \multicolumn{1}{r}{64.4428} & \multicolumn{1}{r}{0.3888}\\ \bottomrule
    \end{tabular}%
  \label{tab:fit}}%
\end{table*}%
\end{center}

\begin{figure}[!h]
\centering
\includegraphics[width=12cm]{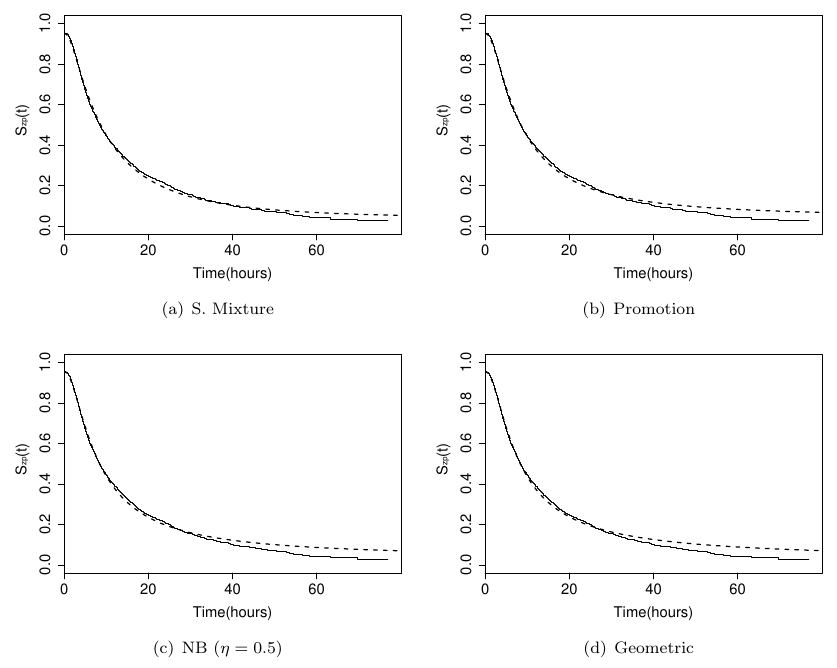}
\caption{Survival curves for the standard mixture ZAC(a), promotion ZAC(b), ZAC-NB (c) and ZAC-Geo (d) models with log-normal baseline distribution. The dashed lines correspond to the fitted model while the solid lines correspond to Kaplan-Meier estimates.}
\label{fig:fit}
\end{figure}

\textcolor{black}{
In Table \ref{delta_aic}, the delta AIC values \cite{anderson2004model} are presented, which can be used to evaluate the relative differences between models. These values measure how well each model fits the data, allowing for the comparison of competing risk approaches.}

\begin{table}[h!]
\centering
\caption{
\textcolor{black}{Delta AIC values and their interpretation for each comparison. Model 1 corresponds to the S. Mixture, Model 2 to the Promotion model, Model 3 to NB ($\eta = 0.5$), and Model 4 to Geo ($\eta = 1$).}}
\begin{tabular}{c|c|c}
\hline
Comparison & Delta AIC & Interpretation \\ \hline
1 vs 2     & 0.66      & Insignificant difference \\ \hline
1 vs 3     & 1.87      & Insignificant difference \\ \hline
1 vs 4     & 5.28      & Moderate evidence \\ \hline
2 vs 3     & 2.53      & Marginal evidence \\ \hline
2 vs 4     & 5.94      & Moderate evidence \\ \hline
3 vs 4     & 3.41      & Moderate evidence \\ \hline
\end{tabular}
\label{delta_aic}
\end{table}

\textcolor{black}{
The analysis of the delta AIC values reveals moderate evidence of differences between the models, particularly when comparing certain configurations, such as ``1 vs 4,'' ``2 vs 4,'' and ``3 vs 4,'' where the delta AIC values fall in the range of 3 to 7. These results suggest that while some models may perform similarly in explaining the data, there are notable distinctions in their fit, highlighting the benefits of considering different competing risk approaches. By examining these variations, researchers can better capture the nuances in the underlying data, leading to more robust and tailored insights. The moderate evidence observed underscores the importance of evaluating multiple models in such analyses, as it allows for a more comprehensive understanding of the competing risks and ensures that the most appropriate model is selected for the study context.}

According to AIC values, the ZAC-Geo presents the best fit to the data. It is worth mentioning that both ZAC-Geo and ZAC-NB are new ZAC models obtained from our proposed approach. The estimates of zero-adjustment and cure from the fitted ZAC-Geo model can be obtained from equation (\ref{BNZICRsur}), considering $\eta = 1$. Then, the zero-adjustment percentage is equal to  $4.544\%$ $\left( \left(\frac{1}{1+\eta\theta \pi_1} \right)^{\frac{1}{\eta}}  = (1 + 21.0093)^{-1}\right)$ and the percentage of cure is 1.528\% $\left( \left(\frac{1}{1+\eta\theta \pi_0} \right)^{\frac{1}{\eta}} = (1 + 64.4428)^{-1}\right)$.
Based on the survival estimates presented in Table \ref{tab:tab4}, at admission 95.46\% of women still have the possibility of vaginal delivery. This value decreases to 61.82\% in 6 hours, 37.53\% in 12 hours, and 26.16\% in 18 hours. These values could support the facilities' management to organize care for \textcolor{black}{pregnant women}.

\begin{center}
  \begin{table*}[h]
  \centering
   \caption{Estimated survival in the zero-adjusted cure survival model with Geometric causes {with log-normal baseline distribution}. }
    \begin{tabular}{lrrrr} \toprule
    Time  & \multicolumn{1}{l}{Admission} & \multicolumn{1}{l}{6 hours} & \multicolumn{1}{l}{12 hours} & \multicolumn{1}{l}{18 hours} \\ \midrule
    $S_{zp}$(y)  & 95.46\% & 61.82\% & 37.53\% & 26.16\% \\ \bottomrule
    \end{tabular}%
  \label{tab:tab4}%
\end{table*}%
\end{center}

\section{Discussion}

This paper presents a unified approach for survival models, considering two essential features: zero-adjustment and cure fraction. The main advantage of this methodology is that it ensures excellent flexibility for modeling. Using the proposed unified ZAC model, a researcher can consider different probability distributions for susceptible individuals' lifetimes. Different distributions can also be considered for the competing causes associated with the occurrence of the event of interest. Our model has particular cases other recently proposed models such as the unified cure models \cite{rodrigues2009unification}, and the mixture and promotion ZAC models \cite{louzada2018zero,de2017zero}. The standard mixture cure model by Berkson and Gage\cite{berkson1952survival} and the promotion cure model \cite{chen1999new} are also particular cases of the proposed model. On the other hand, the ZAC-NB and ZAC-Geo are new in literature, and the obtained models assuming the exponential, log-normal, Gompertz, and Inverse Gaussian distribution as a baseline.
More importantly, if the researchers do not know the random mechanism that generates the data, they can evaluate the goodness of fit for several models, as well as propose, using our approach, new ZAC models from other different discrete distributions in the competitive risk latent variable and other baseline distributions. From there, infer which particular case is most suitable for the data.

In the application section, we exemplify the proposed model's use with sub-Saharan African labor data, considering the susceptible group's log-normal distribution, in agreement with the results of \cite{vahratian2006methodological,zhang2010contemporary}. The results indicate that the ZAC-Geo model excels at others, showing an advantage in modeling by considering more flexible models. It is also possible to infer that there is ZAC competing causes related to the primary outcome (vaginal delivery), which may indicate different causes for medical decision-making during labor.  
We observed that about 5\% of the pregnant women arrive at the hospital already presenting fetal death. Therefore, we estimate that 95\% of women can go through a vaginal birth with a live baby when they arrive at the hospital. After 12 hours of admission, for example, 37\% of the women remain \textcolor{black}{who have not yet} had vaginal delivery. These results can be useful for hospital managers to know the dynamics of entry and exit of pregnant women in hospitals.

An issue with more flexible models, such as the unified model proposed here, is identifiability.  One of the main implications of the lack of identifiability is the instability of estimates \cite{hanin2014identifiability}. The simulation results presented here do not give any indication of instability in the estimates. Besides, they indicate that the AIC can be used to compare models, mainly for larger sample sizes and values of $\eta$ distant from particular cases.  However, since identifiability depends on the baseline distribution, this issue must be investigated for the particular constructed model. 

\textcolor{black}{A common approach to overcome this problem is to include covariates. By incorporating covariates into the unified framework can be achieved by linking parameters of interest, such as the cure fraction (\(p_1\)), the proportion of zero-inflated events (\(p_0\)), and baseline hazard parameters (\(\phi\)), to covariates through systematic components. This approach involves defining link functions that map covariates to these parameters while ensuring they remain within valid ranges. For example, the proportion of zero-inflated events (\(p_0\)) and the cure fraction (\(p_1\)) can be modeled using a multinomial logit link, while the baseline hazard parameters, such as the shape and scale in a Weibull distribution, can be linked to covariates using a log link function.}

\textcolor{black}{Specifically, consider the following structure:
\[
H(p_{0i}, p_{1i}) = \left(\log\left(\frac{p_{0i}}{1 - p_{0i} - p_{1i}}\right), \log\left(\frac{p_{1i}}{1 - p_{0i} - p_{1i}}\right)\right),
\]
where the linear predictors \(\xi_{0i}\) and \(\xi_{1i}\) are defined as \(\xi_{0i} = x_{1i}^\top \beta_1\) and \(\xi_{1i} = x_{2i}^\top \beta_2\), respectively. Similarly, for baseline parameters \(\phi = \{\alpha, \theta\}\), assuming a Weibull baseline distribution, covariates can be incorporated using:
\[
g_1(\alpha_i) = \log(\alpha_i), \quad g_2(\theta_i) = \log(\theta_i).\]}

\textcolor{black}{This formulation not only allows the analyst to incorporate subject-specific factors into the model but also maintains the flexibility of the unified framework by enabling the use of various competing cause distributions (e.g., Binomial, Negative Binomial). For instance, in a clinical context, patient demographics, comorbidities, or treatment regimens could directly influence the cure fraction or the likelihood of experiencing instantaneous failures (zero inflation).}

In general, we have shown that our model is a useful tool for adjusting survival data in the presence of zeros, allowing the evaluation of different particular cases with great flexibility. It is essential to highlight that this model can be applied in several other practical situations that need to accommodate zero adjusted lifetimes. Thus, we believe that the model has a vast application potential beyond the labor example presented here.

\section*{Statements of ethical approval}

The BOLD cohort obtained ethical approval from the following committees: World Health Organization Ethical Review Committee, Makerere University School of Health Sciences Research and Ethics Committee, Uganda,  University of Ibadan/University College Hospital Ethics Committee, Federal Capital Territory Health Research Ethics Committee, Nigeria, and Ondo State Government Ministry of Health Research Ethics Review Committee, Nigeria.

\section*{Declaration of Competing Interest}

No potential conflict of interest was reported by the author(s).

\section*{Acknowledgements}

\textcolor{black}{The authors thank the associate editor and two anonymous referees for their constructive comments and suggestions.} The dataset considered in this paper was obtained as one of the BOLD project stages, with researchers and health professionals' collaboration.  Therefore, we acknowledge the contributions of the country leaders.
We would also like to thank BOLD coordinators Jo\~ao Paulo Souza and Olufemi T. Oladapo. \textcolor{black}{The code used in this study is available at: https://github.com/ramospedroluiz/unifzacr/}

\section*{Funding}

The BOLD project was funded by the Bill \& Melinda Gates Foundation, United States Agency for International Development, and the UNDP-UNFPA-UNICEF-WHO-World Bank Special Program of Research, Development, and Research Training in Human Reproduction (HRP). The researchers were partially supported by CNPq, FAPESP, CAPES, FAEPA of Brazil, and the Public Health Graduate Program from Ribeir\~ao Preto Medical School, University of S\~ao Paulo, Brazil. The research of Francisco Louzada is supported by FAPESP (Grant number: 2013/07375-0) and CNPq (Grant number: 308849/2021-3).

\bibliographystyle{plain}

\begin{thebibliography}{10}

\bibitem{akaike1974new}
Hirotugu Akaike.
\newblock A new look at the statistical model identification.
\newblock {\em IEEE transactions on automatic control}, 19(6):716--723, 1974.

\bibitem{anderson2004model}
D~Anderson and K~Burnham.
\newblock Model selection and multi-model inference.
\newblock {\em Second. NY: Springer-Verlag}, 63(2020):10, 2004.

\bibitem{balakrishnan2020class}
Narayanaswamy Balakrishnan and Fotios~S Milienos.
\newblock On a class of non-linear transformation cure rate models.
\newblock {\em Biometrical Journal}, 62(5):1208--1222, 2020.

\bibitem{berkson1952survival}
Joseph Berkson and Robert~P Gage.
\newblock Survival curve for cancer patients following treatment.
\newblock {\em Journal of the American Statistical Association},
  47(259):501--515, 1952.

\bibitem{brenna2004prognostic}
SMF Brenna, IDCG Silva, LC~Zeferino, Julia~Santucci Pereira, EZ~Martinez, and
  KJ~Syrj{\"a}nen.
\newblock Prognostic value of p53 codon 72 polymorphism in invasive cervical
  cancer in brazil.
\newblock {\em Gynecologic oncology}, 93(2):374--380, 2004.

\bibitem{calsavara2019zero}
Vinicius~F Calsavara, Agatha~S Rodrigues, Ricardo Rocha, Francisco Louzada,
  Vera Tomazella, Ana~CRLA Souza, Rafaela~A Costa, and Rossana~PV Francisco.
\newblock Zero-adjusted defective regression models for modeling lifetime data.
\newblock {\em Journal of Applied Statistics}, pages 1--26, 2019.

\bibitem{cavenague2022log}
Hayala~Cristina Cavenague~de Souza, Francisco Louzada, Mauro~Ribeiro
  de~Oliveira, Bukola Fawole, Adesina Akintan, Lawal Oyeneyin, Wilfred Sanni,
  and Gleici~da Silva Castro~Perdon{\'a}.
\newblock The log-normal zero-inflated cure regression model for labor time in
  an african obstetric population.
\newblock {\em Journal of Applied Statistics}, 49(9):2416--2429, 2022.

\bibitem{chen1999new}
Ming-Hui Chen, Joseph~G Ibrahim, and Debajyoti Sinha.
\newblock A new {B}ayesian model for survival data with a surviving fraction.
\newblock {\em Journal of the American Statistical Association},
  94(447):909--919, 1999.

\bibitem{cobre2013mechanistic}
Juliana Cobre, Gleici~S Castro~Perdon{\'a}, Fernanda~M Peria, and Francisco
  Louzada.
\newblock A mechanistic breast cancer survival modelling through the axillary
  lymph node chain.
\newblock {\em Statistics in medicine}, 32(9):1536--1546, 2013.

\bibitem{cooner2007flexible}
Freda Cooner, Sudipto Banerjee, Bradley~P Carlin, and Debajyoti Sinha.
\newblock Flexible cure rate modeling under latent activation schemes.
\newblock {\em Journal of the American Statistical Association},
  102(478):560--572, 2007.

\bibitem{de2017zero}
Mauro~Ribeiro de~Oliveira, Fernando Moreira, and Francisco Louzada.
\newblock The zero-inflated promotion cure rate model applied to financial data
  on time-to-default.
\newblock {\em Cogent Economics \& Finance}, 5(1):1395950, 2017.

\bibitem{de2022bayesian}
Hayala Cristina~Cavenague de~Souza, Francisco Louzada, Pedro~Luiz Ramos,
  Mauro~Ribeiro de~Oliveira~J{\'u}nior, and Gleici da Silva~Castro Perdon{\'a}.
\newblock A bayesian approach for the zero-inflated cure model: an application
  in a brazilian invasive cervical cancer database.
\newblock {\em Journal of Applied Statistics}, 49(12):3178--3194, 2022.

\bibitem{feller}
William Feller.
\newblock {\em An introduction to probability theory and its applications},
  volume~1.
\newblock Wiley, New York, 1968.

\bibitem{hanin2014identifiability}
Leonid Hanin and Li-Shan Huang.
\newblock Identifiability of cure models revisited.
\newblock {\em Journal of Multivariate Analysis}, 130:261--274, 2014.

\bibitem{kalamatianou2003perpetual}
Aglaia~G Kalamatianou and Sally McClean.
\newblock The perpetual student: Modeling duration of undergraduate studies
  based on lifetime-type educational data.
\newblock {\em Lifetime data analysis}, 9:311--330, 2003.

\bibitem{li2001identifiability}
Chin-Shang Li, Jeremy~MG Taylor, and Judy~P Sy.
\newblock Identifiability of cure models.
\newblock {\em Statistics \& Probability Letters}, 54(4):389--395, 2001.

\bibitem{louzada2018zero}
Francisco Louzada, Fernando~F Moreira, and Mauro~Ribeiro de~Oliveira.
\newblock A zero-inflated non default rate regression model for credit scoring
  data.
\newblock {\em Communications in Statistics-Theory and Methods},
  47(12):3002--3021, 2018.

\bibitem{marko2016prevalence}
Tracy~A Marko, Brandon~J Diessner, and Logan~G Spector.
\newblock Prevalence of metastasis at diagnosis of osteosarcoma: an
  international comparison.
\newblock {\em Pediatric blood \& cancer}, 63(6):1006--1011, 2016.

\bibitem{martinez2014bayesian}
Edson~Z Martinez and Jorge~A Achcar.
\newblock Bayesian bivariate generalized {Lindley} model for survival data with
  a cure fraction.
\newblock {\em Computer methods and programs in biomedicine}, 117(2):145--157,
  2014.

\bibitem{martinez2013mixture}
Edson~Z Martinez, Jorge~A Achcar, Alexandre~AA J{\'a}come, and Jos{\'e}~S
  Santos.
\newblock Mixture and non-mixture cure fraction models based on the generalized
  modified {Weibull} distribution with an application to gastric cancer data.
\newblock {\em Computer methods and programs in biomedicine}, 112(3):343--355,
  2013.

\bibitem{mazucheli2013exponentiated}
Josmar Mazucheli, Em{\'\i}lio~A Coelho-Barros, and Jorge~Alberto Achcar.
\newblock The exponentiated exponential mixture and non-mixture cure rate model
  in the presence of covariates.
\newblock {\em Computer methods and programs in biomedicine}, 112(1):114--124,
  2013.

\bibitem{milienos2022reparameterization}
Fotios~S Milienos.
\newblock On a reparameterization of a flexible family of cure models.
\newblock {\em Statistics in Medicine}, 41(21):4091--4111, 2022.

\bibitem{oladapo2015better}
Olufemi~T Oladapo, Jo{\~a}o~P Souza, Meghan~A Bohren, {\"O}zge Tun{\c{c}}alp,
  Joshua~P Vogel, Bukola Fawole, Kidza Mugerwa, and A~Metin G{\"u}lmezoglu.
\newblock Who {B}etter {O}utcomes in {L}abour {D}ifficulty ({BOLD}) project:
  innovating to improve quality of care around the time of childbirth.
\newblock {\em Reproductive health}, 12(1):48, 2015.

\bibitem{oladapo2018progression}
Olufemi~T Oladapo, Joao~Paulo Souza, Bukola Fawole, Kidza Mugerwa, Gleici
  Perdon{\'a}, Domingos Alves, Hayala Souza, Rodrigo Reis, Livia
  Oliveira-Ciabati, Alexandre Maiorano, et~al.
\newblock Progression of the first stage of spontaneous labour: A prospective
  cohort study in two sub-{S}aharan {A}frican countries.
\newblock {\em PLoS medicine}, 15(1):e1002492, 2018.

\bibitem{papastamoulis2024bayesian}
Panagiotis Papastamoulis and Fotios~S Milienos.
\newblock Bayesian inference and cure rate modeling for event history data.
\newblock {\em TEST}, pages 1--27, 2024.

\bibitem{perdona2011general}
Gleici~Castro Perdon{\'a} and Francisco Louzada-Neto.
\newblock A general hazard model for lifetime data in the presence of cure
  rate.
\newblock {\em Journal of Applied Statistics}, 38(7):1395--1405, 2011.

\bibitem{ramos2024sampling}
Pedro~L Ramos, Daniel~CF Guzman, Alex~L Mota, Daniel~A Saavedra, Francisco~A
  Rodrigues, and Francisco Louzada.
\newblock Sampling with censored data: a practical guide.
\newblock {\em Journal of Statistical Computation and Simulation},
  95(18):4072--4106, 2024.

\bibitem{rodrigues2009unification}
Josemar Rodrigues, Vicente~G Cancho, M{\'a}rio de~Castro, and Francisco
  Louzada-Neto.
\newblock On the unification of long-term survival models.
\newblock {\em Statistics \& Probability Letters}, 79(6):753--759, 2009.

\bibitem{souza2015development}
Jo{\~a}o~Paulo Souza, Olufemi~T Oladapo, Meghan~A Bohren, Kidza Mugerwa, Bukola
  Fawole, Leonardo Moscovici, Domingos Alves, Gleici Perdona, Livia
  Oliveira-Ciabati, Joshua~P Vogel, et~al.
\newblock {The development of a simplified, effective, labour
  monitoring-to-action ({SELMA}) tool for better outcomes in labour difficulty
  (BOLD): study protocol}.
\newblock {\em Reproductive health}, 12(1):49, 2015.

\bibitem{tsodikov2003estimating}
AD~Tsodikov, Joseph~G Ibrahim, and AY~Yakovlev.
\newblock Estimating cure rates from survival data: an alternative to
  two-component mixture models.
\newblock {\em Journal of the American Statistical Association},
  98(464):1063--1078, 2003.

\bibitem{tsodikov1996stochastic}
Alexander~D Tsodikov, Andrei~Yu Yakovlev, and B~Asselain.
\newblock {\em Stochastic models of tumor latency and their biostatistical
  applications}, volume~1.
\newblock World Scientific, 1996.

\bibitem{vahratian2006methodological}
Anjel Vahratian, James~F Troendle, Anna~Maria Siega-Riz, and Jun Zhang.
\newblock Methodological challenges in studying labour progression in
  contemporary practice.
\newblock {\em Paediatric and perinatal epidemiology}, 20(1):72--78, 2006.

\bibitem{yin2005general}
Guosheng Yin and Joseph~G Ibrahim.
\newblock A general class of bayesian survival models with zero and nonzero
  cure fractions.
\newblock {\em Biometrics}, 61(2):403--412, 2005.

\bibitem{zhang2010contemporary}
Jun Zhang, Helain~J Landy, D~Ware Branch, Ronald Burkman, Shoshana Haberman,
  Kimberly~D Gregory, Christos~G Hatjis, Mildred~M Ramirez, Jennifer~L Bailit,
  Victor~H Gonzalez-Quintero, et~al.
\newblock Contemporary patterns of spontaneous labor with normal neonatal
  outcomes.
\newblock {\em Obstetrics \& Gynecology}, 116(6):1281--1287, 2010.

\end{thebibliography}

\end{document}